\documentclass[10pt]{amsart}
\usepackage{a4wide}
\usepackage{amsfonts}
\usepackage{psfrag}
\usepackage{bbm}

\usepackage{amsmath}
\usepackage{amssymb}
\usepackage{amsthm}
\usepackage{graphicx}

\newtheorem {lemma}{Lemma}
\newtheorem {thm}{Theorem}
\newtheorem {rem}{Remark}
\usepackage{color}

\newtheorem {cor}{Corollary}
 
\newcommand{\E}{\mathbb{E}}
\newcommand{\N}{\mathbb{N}}

\newcommand{\Q}{\mathbb{Q}}
\DeclareMathOperator{\skel}{skel}
\newcommand{\ind}{\mathbbm{1}}
\newcommand{\Fg}{\mathcal{F}}

\renewcommand{\P}{\mathbb{P}}
\parindent 0pt

\numberwithin{equation}{section}
\numberwithin{equation}{section}
\numberwithin{equation}{section}
\newcommand{\Z}{\mathbb{Z}}


\begin{document}
\title{Catalytic branching processes via spine techniques and renewal theory}
\author{Leif D\"oring}
\address{Fondation Math\'ematique de Paris\newline
 and Laboratoire de Probabilit\'es et Mod\'eles Al\'eatoires (CNRS UMR. 7599) Universit\'e Paris 6 � Pierre et Marie Curie, U.F.R. Math\'ematiques, 4 place Jussieu, 75252 Paris Cedex 05, France}
\email{leif.doering@upmc.fr}
\author{Matthew Roberts}
\address{Weierstrass Institute for Applied Analysis and Stochastics, Mohrenstrasse 39, 10117 Berlin, Germany}
\email{mattiroberts@gmail.com}
\thanks{LD acknowledges the support of the Fondation Sciences Math\'ematiques de Paris. MR thanks
ANR MADCOF (grant ANR-08-BLAN-0220-01) and WIAS for their support.}
\subjclass[2000]{Primary 60J27; Secondary 60J80}
\keywords{Renewal Theorem, Local Times, Branching Process, Many-to-Few Lemma}

\maketitle

\begin{abstract}
	In this article we contribute to the moment analysis of branching processes in catalytic media. The many-to-few lemma based on the spine technique is used  to derive a system of (discrete space) partial differential equations for the number of particles in a variation of constants formulation. The long-time behavior is then deduced from renewal theorems and induction.
\end{abstract}

\section{Introduction and Results}
	A classical subject of probability theory is the analysis of branching processes in discrete or continuous time, going back to the study of extinction of family names by Francis Galton. There have been many contributions to the area since, and we present here an application of a recent development in the probabilistic theory. We identify qualitatively different regimes for the longtime behaviour for moments of sizes of populations in a simple model of a branching Markov process in a catalytic environment.

	\smallskip
	To give some background for the branching mechanism, we recall the discrete-time Galton-Watson process. Given a random variable $X$ with law $\mu$ taking values in $\N$, the branching mechanism is modelled as follows: 	for a deterministic or random initial number $Z_0\in \mathbb{N}$ of particles, one defines for $n=1,2,...$
	\begin{align*}
		Z_{n+1}=\sum_{r=0}^{Z_{n}} X_r(n),
	\end{align*}
	where all $X_r(n)$ are independent and distributed according to $\mu$. Each particle in generation $n$ is thought of as giving birth to a random number of particles according to $\mu$, and these particles together form generation $n+1$. For the continuous-time analogue each particle carries an independent exponential clock of rate 1 and performs its breeding event once its clock rings.
	
	\smallskip	
	It is well-known that a crucial quantity appearing in the analysis is $m=\E[X]$, the expected number of offspring particles. The process has positive chance of long-term survival if and only if $m>1$. This is known as the supercritical case. The cases $m=1$ (critical) and $m<1$ (subcritical) also show qualitatively different behaviour in the rate at which the probability of survival decays. As this paper deals with the moment analysis of a spatial 	relative to this system, we mention the classical trichotomy for the moment asymptotics of Galton-Watson processes:
	\begin{align}
		\lim_{t\to\infty}e^{- k(m-1) t }\E\big[Z_t^k\big]\in(0,\infty)\quad\forall k\in \N &\quad\text{ if } m<1, \label{eqn:m1}\\
		\lim_{t\to\infty}t^{k-1}\E\big[Z_t^k\big]\in(0,\infty)\quad\forall k\in \N &\quad\text{ if } m=1 \label{eqn:m2}\\
		\lim_{t\to\infty}e^{- k(m-1) t }\E\big[Z_t^k\big]\in(0,\infty)\quad\forall k\in \N &\quad\text{ if }m>1 \label{eqn:m3}
	\end{align}
	so that all moments increase exponentially to infinity if $m>1$, increase polynomially if $m=1$, and decay exponentially fast to zero if $m<1$.
	\smallskip
	
	In the present article we are interested in a simple spatial version of the Galton-Watson process for which a system of branching particles moves in space and particles branch only in the presence of a catalyst. More precisely, we start a particle $\xi$ which moves on some countable set $S$ according to a continuous-time Markov process with Q-matrix $\mathcal A$. This particle carries an exponential clock of rate $1$ that only ticks if $\xi$ is at the same site as the catalyst, which we assume sits at some fixed site $0\in S$. 
	 If and when the clock rings, then the particle dies and is replaced in its position by a random number of offspring. This number is distributed according to some offspring distribution $\mu$, and all newly born particles behave as independent copies of their parent: they move on $S$ according to $\mathcal A$ and branch after an exponential rate 1 amount of time spent at $0$.
	 \smallskip
	 
	 In recent years several authors have studied such branching systems. Often the first quantities that are analyzed are moments of the form
	\begin{align*}
		M^k(t,x,y) = \E\big[N_t(y)^k\,\big |\,\xi_0=x\big]\qquad\text{and}\qquad M^k(t,x) = \E\big[N_t^k\,\big |\,\xi_0=x\big],
	\end{align*}
	where $N_t(y)$ is the number of particles alive at site $y$ at time $t$, and $N_t=\sum_{y\in S}N_t(y)$ is the total number of particles alive at time $t$. Under the additional assumption that $\mathcal A=\Delta$ is the discrete Laplacian on $\Z^d$, the moment analysis was first carried out in \cite{ABY98}, \cite{ABY98b},  \cite{AB00} via partial differential equations and Tauberian theorems. More recently, the moment analysis, and moreover the study of conditional limit theorems, was pushed forward to more general spatial movement $\mathcal A$ assuming
		\begin{itemize}
		\item[\textbf{(A1)}] irreducibility,
		\item[\textbf{(A2)}] spatial homogeneity,
		\item[\textbf{(A3})] symmetry,
		\item[\textbf{(A4})] finite variance of jump sizes.
	\end{itemize}
	Techniques such as Bellman-Harris branching processes (see \cite{TV03},\cite{VT05}, \cite{B10}, \cite{B11}), operator theory (see \cite{Y10})  and renewal theory (see \cite{HVT10}) have been applied successfully. Some of these tools also apply in a non-symmetric framework. We present a purely stochastic approach avoiding the assumptions $\textbf{(A1)-(A4)}$. In order to avoid many pathological special cases we only assume 
	\begin{align*}
		\text{\textbf{(A)} } \text{ the motion governed by }\mathcal A \text{ is irreducible}.
	\end{align*}
	This assumption is not necessary, and the interested reader may easily reconstruct the additional cases from our proofs.\\
	 In order to analyze the moments $M^k$ one can proceed in two steps. First, a set of partial differential equations for $M^k$ is derived. This can be done for instance as in \cite{AB00} via analytic arguments from partial differential equations for the generating functions $\E_x[e^{- z N_t(y)}]$ and $\E_x[e^{- z N_t}]$ combined with Fa\`a di Bruno's formula of differentiation. The asymptotic properties of solutions to those differential equations are then analyzed in a second step where more information on the transition probabilities corresponding to $\mathcal A$ implies more precise results on the asymptotics for $M^k$. This is where the finite variance assumption is used via the local central limit theorem.
	 \smallskip
		
	The approach presented in this article is based on the combinatorial spine representation of \cite{HM11} to derive sets of partial differential equations, in variation of constants form, for the $k$th moments of $N_t(y)$ and $N_t$. A set of combinatorial factors can be given a direct probabilistic explanation, whereas the same factors appear otherwise from Fa\`a di Bruno's formula. Those equations are then analyzed via renewal theorems. We have to emphasize that under the assumption \textbf{(A)} only, general precise asymptotic results are of course not possible so that we aim at giving a qualitative description. Compared to the fine results in the presence of a local central limit theorem (such as Lemma 3.1 of \cite{HVT10} for finite variance transitions on $\Z^4$) our qualitative description is rather poor. On the other hand, the generality of our results allows for some interesting applications. For example, one can easily deduce asymptotics for moments of the number of particles when the catalyst is not fixed at zero, but rather follows some Markov process of its own, simply by considering the difference walk.
	\smallskip
	
	 To state our main result, we denote the transition 	probabilities of $\mathcal A$ by $p_t(x,y)=\P_x(\xi_t=y)$ and the Green function by
	\begin{align*}
		G_\infty(x,y)=\int_0^\infty p_t(x,y)\,dt.
	\end{align*}
	Recall that, by irreducibility, the Green function is finite for all $x,y\in S$ if and only if $\mathcal A$ is transient. For the statement of the result let us further denote by 
	\begin{align*}
		L_t(y)=\int_0^t \ind_{\{\xi_s=y\}} ds
	\end{align*}
	the time of $\xi$ spent at site $y$ up to time $t$.

		\begin{thm}\label{thm:1}
		Suppose that $\mu$ has finite moments of all orders; then the following regimes occur for all integers $k\geq 1$:
		\begin{itemize}
			\item[i)] If the branching mechanism is \textbf{subcritical}, then
				\begin{align*}
					\lim_{t\to\infty}M^k(t,x)\in (0,\infty) \quad\text{ if } \mathcal A \text{ is transient},\\
					\lim_{t\to\infty}M^k(t,x)= 0 \quad \text{ if } \mathcal A \text{ is recurrent},
				\end{align*}
				and
				\begin{align*}
					\lim_{t\to\infty}M^k(t,x,y) =0 &\qquad \text{ in all cases.}
				\end{align*}
			\item[ii)] If the branching mechanism is \textbf{critical}, then 
								\begin{align*}
		   	\lim_{t\to\infty}\frac{M^k(t,x)}{\E_x[L_t(0)^{k-1}]}&\in (0,\infty)\quad\text{and}\quad M^1(t,x,y)=p_t(x,y).
				\end{align*}
				
			\item[iii)] If the branching mechanism is \textbf{supercritical}, then there is a critical constant
				\begin{align*}
					\beta=\frac{1}{G_\infty(0,0)}+1\geq 1
				\end{align*}
				such that
				\begin{itemize}
					\item[a)] for $m<\beta$
						\begin{align*}
						\lim_{t\to\infty}M^1(t,x)&\in (0,\infty)\quad\text{and}\quad \lim_{t\to\infty}M^1(t,x,y)=0;
						\end{align*}
						further, there exist constants $c$ and $C$ such that
						\[c\E_x[L_t(0)^{k-1}]\leq M^k(t,x) \leq C t^{k-1}.\]
					\item[b)] for $m=\beta$
						\begin{align*}
							\lim_{t\to\infty}M^k(t,x)=\infty,
\end{align*}
and
						\begin{align*}					
							\lim_{t\to\infty}M^k(t,x,y)=\infty\quad &\text{if }\int_0^\infty rp_r(0,0)\,dr=\infty\\
												\lim_{t\to\infty}{M^k(t,x,y)\over t^{k-1}}\in (0,\infty)\quad &\text{if }\int_0^\infty rp_r(0,0)\,dr<\infty.
						\end{align*}
					(In both cases the growth is subexonential.)
					
					\item[c)] for $m>\beta$
				\begin{align*}
									\lim_{t\to\infty} e^{-kr(m) t}M^k(t,x,y)\in (0,\infty)
\quad\text{and}\quad	\lim_{t\to\infty} e^{-kr(m) t}M^k(t,x)&\in (0,\infty)
				\end{align*}
				where $r(m)$ equals the unique solution $\lambda$ to $\int_0^\infty e^{-\lambda t}p_t(0,0)\,dt =\frac{1}{m-1}$.
				\end{itemize}
		\end{itemize}	
	\end{thm}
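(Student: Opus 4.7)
My plan is to derive, via the spine / many-to-few representation of \cite{HM11}, a family of renewal equations for $M^k(t,x,y)$ and $M^k(t,x)$, and then extract the claimed asymptotics from standard, defective, and exponential renewal theorems applied to them. The three super/sub-critical sub-regimes in the statement will correspond precisely to the defective/critical/excessive cases of one underlying renewal kernel.

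For the base case $k=1$, the many-to-one lemma yields $M^1(t,x,y) = \E_x[e^{(m-1)L_t(0)}\ind_{\{\xi_t=y\}}]$, so by the Feynman--Kac / variation-of-constants identity $M^1$ is the unique solution of
\begin{equation*}
M^1(t,x,y) = p_t(x,y) + (m-1)\int_0^t p_s(x,0)\, M^1(t-s,0,y)\,ds.
\end{equation*}
Evaluating at $x=y=0$ gives a scalar renewal equation with kernel density $(m-1)p_s(0,0)$, whose total mass equals $(m-1)G_\infty(0,0)$. This mass is $<1$, $=1$, or $>1$ exactly when $m<\beta$, $m=\beta$, or $m>\beta$ respectively, which explains the definition of $\beta$. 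I then apply, according to the regime: (a) the defective renewal theorem when $m\le 1$ or $1<m<\beta$, giving $\lim M^1(t,0,0)\in(0,\infty)$ in the transient case and decay to $0$ in the recurrent one; (b) the key renewal theorem at $m=\beta$, where the long-time behaviour splits according to whether $\int_0^\infty r\,p_r(0,0)\,dr$ is finite; (c) the Cram\'er--Kesten exponential renewal theorem at $m>\beta$, which after multiplication by $e^{-r(m)t}$ identifies the exponential rate $r(m)$ defined by $\int e^{-r(m)t}p_t(0,0)\,dt = 1/(m-1)$. For general $x,y$ the asymptotics are then transported from $(0,0)$ back through the same integral identity.

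For $k\ge 2$, the many-to-few formula from \cite{HM11} represents (factorial) moments of $N_t(y)$ as sums over plane trees with $k$ leaves, each internal vertex contributing a factorial moment of $\mu$ and each edge a motion transition. Collecting terms by the first branching time on the spine produces, by induction on $k$, an equation of the form
\begin{equation*}
M^k(t,x,y) = p_t(x,y) + (m-1)\int_0^t p_s(x,0)\, M^k(t-s,0,y)\,ds + F_k(t,x,y),
\end{equation*}
where $F_k$ is built from integrals of products $\prod_j M^{k_j}(t-s,0,y)$ with $\sum_j k_j = k$ and all $k_j<k$. The inductive hypothesis supplies the asymptotics of $F_k$; feeding this forcing into the renewal equation, whose kernel is exactly the one from the $k=1$ analysis, and applying the same trio of renewal theorems yields the corresponding asymptotics for $M^k$. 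In particular, the scaling $\E_x[L_t(0)^{k-1}]$ emerges naturally because $L_t(0)$ is the intrinsic clock of the renewal measure, and the equations for the total-mass moments $M^k(t,x)$ are obtained in parallel by replacing $p_t(x,y)$ with the constant $1$ in the forcing term (which is what produces a positive limit in the transient subcritical case instead of decay to zero).

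The main obstacle is verifying, under assumption \textbf{(A)} alone, the hypotheses of the renewal theorems in each regime: direct Riemann integrability of the forcing at criticality, non-arithmeticity needed for pointwise (as opposed to Ces\`aro) convergence, and the $\int_0^\infty r p_r(0,0)\,dr$ dichotomy that separates the two subcases of (iii)(b). A secondary difficulty is the bookkeeping required for the intermediate regime $1<m<\beta$ of (iii)(a): there $F_k$ does not admit a single clean limit, and one must match leading-order behaviour branch-by-branch through the tree sum to obtain both the lower bound of order $\E_x[L_t(0)^{k-1}]$ and the upper bound of order $t^{k-1}$. The supercritical case $m>\beta$ is by contrast the cleanest, as the exponential tilt by $e^{-r(m)t}$ converts the renewal equation into a proper one to which the standard limit theorem applies uniformly in $k$.
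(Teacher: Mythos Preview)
Your plan is essentially the paper's own strategy: many-to-few to produce renewal-type equations, then the trio of renewal theorems (proper, defective, exponentially tilted) to read off the asymptotics, with induction on $k$. The base case $k=1$ is handled identically.

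The one structural difference worth flagging is in the recursion for $k\ge 2$. You propose an equation in which $M^k$ reappears on the right through the \emph{same} kernel $(m-1)p_s(\cdot,0)$ as for $M^1$, with a forcing $F_k$ built from lower moments; you then have to run a renewal argument at every level of the induction, and in the supercritical case the forcing grows like $e^{kr(m)t}$, so the tilt must be by $e^{-kr(m)t}$ (making the kernel strictly defective) rather than by $e^{-r(m)t}$. The paper instead decomposes on the first \emph{separation} time of the $k$ spines---the first branching event at which they do not all follow the same child---and obtains
\[
M^k(t,x,y)=M^1(t,x,y)+M^1(t,x,0)\ast g_k\big(M^1(t,0,y),\ldots,M^{k-1}(t,0,y)\big),
\]
with an explicit combinatorial $g_k$. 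Here $M^k$ appears only on the left, the convolution kernel is $M^1(\cdot,\cdot,0)$ whose asymptotics are already in hand, and no further renewal theorem is needed for $k\ge 2$: the induction is a direct substitution. Your equation and the paper's are equivalent (yours is what one gets by unfolding $M^1$ in theirs via the $k=1$ renewal identity), so your route is correct; the paper's choice just short-circuits the repeated renewal step and the attendant direct-Riemann-integrability checks on the growing forcing. For the subcritical case the paper also bypasses renewal theory entirely, applying dominated convergence directly to the $\Q^k$-expectation, and for the critical/weakly-supercritical bounds it uses an elementary moment inequality $\E[Y^a]\E[Y]\ge \E[Y^b]\E[Y^{a-b+1}]$ to collapse $g_k$ to its leading term $M^1 M^{k-1}$, which makes the $\E_x[L_t(0)^{k-1}]$ scaling drop out from iterating the single convolution $p_s(x,0)\ast M^{k-1}$.
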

We did not state all the asymptotics in cases ii) and iii)a). Our methods, see Lemma \ref{highermoments}, do allow for investigation of these cases too; in particular they show how $M^k(t,x,y)$ can be expressed recursively by $M^i(t,x,y)$ for $i<k$. However, without further knowledge of the underlying motion, it is not possible to give any useful and general information. If more information on the tail of $p_t(x,y)$ is available then the recursive equations can indeed be analyzed: for instance for kernels on $\Z^d$ with second moments the local central limit theorem can be applied leading to $p_t(x,y)\sim Ct^{-d/2}$, and such cases have already been addressed by other authors.\\

	
		The formulation of the theorem does not include the limiting constants. Indeed, the proofs give some of those (in an explicit form involving the transition probabilities $p_t$) in the supercritical regime but they seem to be of little use. The use of spectral theory for 			symmetric $Q$-matrices $\mathcal A$ allows one to derive the exponential growth rate $r(m)$ as the maximal eigenvalue of a Schr\"odinger operator with one-point potential and the appearing constants via the eigenfunctions. 		Our renewal theorem based proof gives the representation of $r(m)$ as the inverse of the Laplace transform of $p_t(0,0)$ at $1/(m-1)$ and the eigenfunction expressed via integrals of $p_t(0,0)$. As $p_t(0,0)$ is rarely 			known explicitly, the integral form of the constants is not very useful (apart from the trivial case of Example 1 below).
		Only in case iii) b) for $\int_0^\infty rp_r(0,0)\,dr=\infty$ are the proofs unable to give strong asymptotics. This is caused by the use of an infinite-mean renewal theorem which only gives asymptotic bounds up to an unknown factor 		between $1$ and $2$.
		There is basically one example in which $p_t(0,0)$ is trivially known:
	
	\smallskip
	\textbf{Example 1:} For the trivial motion $\mathcal A=0$, i.e. branching particles are fixed at the same site as the catalyst, the supercritical cases iii) a) and b) do not occur as $\mathcal A$ is trivially recurrent so that $\beta=1$. Furthermore, in this example 	$p_t(0,0)=1$ for all $t\geq 0$ so that $r(m)=m-1$. In fact by examining the proof of Theorem \ref{thm:1} one recovers (\ref{eqn:m1},\ref{eqn:m2},\ref{eqn:m3}) with all constants.
	
	\medskip		
	The explicit representation for the exponential growth rate allows for a more careful comparison with the non-spatial case. 
	\begin{cor}
		Let $r(m)$ be the exponential growth rate obtained in the supercritical case of Theorem~1. Then $$m \mapsto r(m)$$ is convex, with
		\begin{align*}
			r(1)=0,\qquad r(m)\leq m-1,\qquad \lim_{m\to\infty}\frac{r(m)}{(m-1)}=1.
		\end{align*}
	\end{cor}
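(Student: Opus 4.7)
Let $\phi(\lambda) := \int_0^\infty e^{-\lambda t} p_t(0,0)\,dt$ denote the Laplace transform of the return probability; it is continuous and strictly decreasing with $\phi(0) = G_\infty(0,0)$, so the defining equation for $r$ reads $r(m) = \phi^{-1}(1/(m-1))$. The elementary two-sided bound $e^{-q_0 t} \leq p_t(0,0) \leq 1$, with $q_0 = -\mathcal A(0,0)$ (the chain remains at $0$ with probability at least $e^{-q_0 t}$, the first jump being exponential of rate $q_0$), integrates to
\[
\frac{1}{\lambda + q_0} \;\leq\; \phi(\lambda) \;\leq\; \frac{1}{\lambda}.
\]
Inserting $\lambda = r(m)$ and using $\phi(r(m)) = 1/(m-1)$ rearranges to $m - 1 - q_0 \leq r(m) \leq m - 1$, yielding both $r(m) \leq m-1$ and the asymptotic $r(m)/(m-1) \to 1$ as $m \to \infty$.

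The equation $r(1) = 0$ is either a direct limit or a boundary value: in the recurrent case $\beta = 1$ and $\phi(0) = \infty$, so letting $m \downarrow 1$ sends $1/(m-1) \to \infty$ and $r(m) \to 0$ by continuity of $\phi^{-1}$; in the transient case $r$ vanishes already at $m = \beta$ (since $\phi(0) = 1/(\beta - 1)$) and extends naturally by $0$ on $[1, \beta]$.

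The convexity is the step requiring genuine input, and the cleanest route is via the Feynman--Kac representation
\[
M^1(t, x) = \E_x\bigl[\exp\bigl((m-1) L_t(0)\bigr)\bigr],
\]
obtained from the first-moment equation $\partial_t M^1 = \mathcal{A} M^1 + (m-1)\ind_{\{\cdot=0\}} M^1$ (a special case of the spine machinery used in the proof of Theorem~\ref{thm:1}). For each fixed $x$ and $t$, the map $\alpha \mapsto \log \E_x[\exp(\alpha L_t(0))]$ is convex as the cumulant generating function of the non-negative random variable $L_t(0)$, so $m \mapsto \tfrac{1}{t}\log M^1(t,x)$ is convex for every $t$. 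Case iii)(c) of Theorem~\ref{thm:1} gives $\tfrac{1}{t}\log M^1(t,x) \to r(m)$ pointwise on $(\beta,\infty)$, and on $[1,\beta]$ the same quantity tends to $0 = r(m)$ under the natural extension; since pointwise limits of convex functions are convex, $r$ is convex on $[1,\infty)$.

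The obstacle I would expect to hit is precisely this convexity step: implicit differentiation of $\phi(r(m)) = 1/(m-1)$ shows convexity of $r$ to be equivalent to the inequality $2(\phi')^2 \leq \phi\,\phi''$, which is strictly stronger than the log-convexity $(\phi')^2 \leq \phi\,\phi''$ delivered by Cauchy--Schwarz applied to the moments of $p_t(0,0)$. One really needs the probabilistic identification of $r$ as a limiting log-moment generating function to bypass this gap of a factor $2$.
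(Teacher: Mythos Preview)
Your proof is correct. For $r(1)=0$, $r(m)\leq m-1$ and $\lim_{m\to\infty} r(m)/(m-1)=1$ you argue by sandwiching $\phi$ via $e^{-q_0 t}\leq p_t(0,0)\leq 1$, which is exactly the kind of direct manipulation of the defining equation that the paper invokes (the paper's own proof is a single sentence, ``This follows from elementary manipulations of the defining equation for $r(m)$'', with no further detail).

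For the convexity you deviate from the paper's stated route, and your detour is well motivated. Your computation that implicit differentiation of $\phi(r(m))=1/(m-1)$ reduces convexity of $r$ to the inequality $\phi\phi''\geq 2(\phi')^2$ is correct, and you are right that this is strictly stronger than the Cauchy--Schwarz log-convexity bound $(\phi')^2\leq \phi\phi''$ available for a generic Laplace transform. Your alternative---identify $r(m)$ as the pointwise limit $\lim_{t\to\infty}\tfrac{1}{t}\log \E_x[e^{(m-1)L_t(0)}]$ via Lemma~\ref{l} and Theorem~\ref{thm:1} iii)c) (with the subexponential cases giving limit $0$ on $[1,\beta]$), then use convexity of cumulant generating functions and the fact that pointwise limits of convex functions are convex---is clean, self-contained, and uses only ingredients already present in the paper. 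Since the paper offers no details for the convexity step, your argument fills a genuine gap in the exposition; the final paragraph explaining \emph{why} the Feynman--Kac route is needed is a nice addition rather than an admission of difficulty.
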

	\begin{proof}
		This follows from elementary manipulations of the defining equation for $r(m)$.
	\end{proof}
	
\begin{rem}
	We reiterate here that our results can be generalized when the fixed branching source is replaced by a random branching source moving according to a random walk independent of the branching particles. For the proofs the branching particles only have to be replaced by branching particles relative to the branching source.
\end{rem}

\section{Proofs}

    The key tool in our proofs will be the many-to-few lemma proved in \cite{HM11} which relies on modern spine techniques. These emerged from work of Kurtz, Lyons, Pemantle and Peres in the mid-1990s \cite{kurtz_et_al:conceptual_kesten_stigum, lyons:simple_path_to_biggins, lyons_et_al:conceptual_llogl_mean_behaviour_bps}. The idea is that to understand certain functionals of branching processes, it is enough to carefully study the behaviour of one special particle, the \emph{spine}. In particular very general \emph{many-to-one} lemmas emerged, allowing one to easily calculate expectations of sums over particles like
    \[\E\left[\sum_{v\in N_t} f(v) \right],\]
    where $f(v)$ is some well-behaved functional of the behaviour of the particle $v$ up to time $t$, and $N_t$ here is viewed as the \emph{set} of particles alive at time $t$, rather than the number. It will always be clear from the 	context which meaning for $N_t$ is intended.
    
	It is natural to ask whether similar results exist for higher moments of sums over $N_t$. This is the idea behind \cite{HM11}, wherein it turns out that to understand the $k$th moment one must consider a system of $k$ particles. 	The $k$ particles introduce complications compared to the single particle required for first moments, but this is still significantly simpler than controlling the behaviour of the potentially huge random number of particles in
	$N_t$.
	
	\vspace{2mm}
    
    While we do not need to understand the full spine setup here, we shall require some explanation.
    
    For each $k\geq0$ let $p_k=\P(X = k)$ and $m_k = \E[X^k]$, the $k$th moment of the offspring distribution (in particular $m_1 = m$). We define a new measure $\Q = \Q^k_x$, under which there are $k$ distinguished lines of descent known as spines. The construction of $\Q$ relies on a carefully chosen change of measure, but we do not need to understand the full construction and instead refer to \cite{HM11}. In order to use the technique, we simply have to understand the dynamics of the system under $\Q$. Under $\Q^k_x$ particles behave as follows:
    \begin{itemize}
    \item We begin with one particle at position $x$ which (as well as its position) has a mark $k$. We think of a particle with mark $j$ as carrying $j$ spines.
    \item Whenever a particle with mark $j$, $j\geq1$, spends an (independent) exponential time with parameter $m_j$ in the same position as the catalyst, it dies and is replaced by a random number of new particles with law $A_j$.
    \item The probability of the event $\{ A_j=a\}$ is $a^j p_a m_j^{-1}$. (This is the \emph{$j$th size-biased} distribution relative to $\mu$.)
    \item Given that $a$ particles $v_1,\ldots,v_a$ are born, the $j$ spines each choose a particle to follow independently and uniformly at random. Thus particle $v_i$ has mark $l$ with probability $a^{-l}(1-a^{-1})^{j-l}$, $l=0,\ldots,j$, $i=1,\ldots,a$. We also note that this means that there are always $k$ spines amongst the particles alive; equivalently the sum of the marks over all particles alive always equals $k$.
    \item Particles with mark 0 are no longer of interest (in fact they behave just as under $\P$, branching at rate 1 when in the same position as the catalyst and giving birth to numbers of particles with law $\mu$, but we will not need to use this).
    \end{itemize}
    For a particle $v$, we let $X_v(t)$ be its position at time $t$ and $B_v$ be its mark (the number of spines it is carrying). Let $\sigma_v$ be the time of its birth and $\tau_v$ the time of its death, and define $\sigma_v(t) = \sigma_v \wedge t$ and $\tau_v(t) = \tau_v \wedge t$. Let $\chi^i_t$ be the current position of the $i$th spine. We call the collection of particles that have carried at least one spine up to time $t$ the \emph{skeleton} at time $t$, and write $\skel(t)$. Figure \ref{skelfig} gives an impression of the skeleton at the start of the process.
    
   \begin{figure}[h!]
   \centering
   \includegraphics[width=9cm]{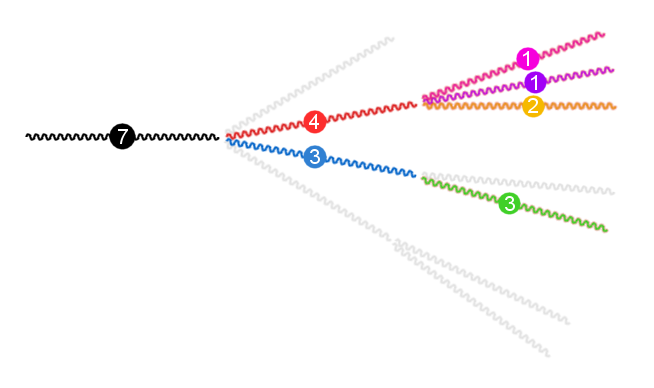}
   \caption{An impression of the start of the process: each particle in the skeleton is a different colour, and particles not in the skeleton are drawn in pale grey. The circles show the number of spines being carried by each particle in the skeleton. \label{skelfig}}
   \end{figure}
   
  A much more general form of the following lemma was proved in \cite{HM11}.
    
    \begin{lemma}[Many-to-few]\label{many_to_few}
    Suppose that $f:\mathbb{R}\to\mathbb{R}$ is measurable. Then, for any $k\geq1$,
    \begin{multline*}
    \E\left[\sum_{v_1,\dots,v_k \in N_t} f(X_{v_1}(t))\cdots f(X_{v_k}(t))\right]\\
    = \Q^k\left[ f(\chi^1_t)\cdots f(\chi^k_t) \prod_{v\in \skel(t)} \exp\left( \left(m_{B_v} - 1\right)\int_{\sigma_v(t)}^{\tau_v(t)} \ind_{0}(X_v(s))ds \right)\right].
    \end{multline*}
    \end{lemma}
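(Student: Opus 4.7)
The approach is the standard spine construction: I would build an enriched probability space carrying both the full tree and the $k$ spines, and relate $\P$, an intermediate measure $\tilde{\P}^k$, and $\Q^k$ by an explicit Radon--Nikodym derivative. Concretely, $\tilde{\P}^k$ is defined so that the underlying branching tree has law $\P$ and, each time a particle $v$ that carries $j$ spines branches into $a_v$ offspring, the $j$ spines independently and uniformly select children to follow. In particular, conditional on the tree, the $\tilde{\P}^k$-probability of the event $\{\chi^i_t = v_i,\, i=1,\ldots,k\}$ equals $\prod_{w\in\skel(t),\,\tau_w\le t} a_w^{-B_w}$, where $B_w$ denotes the mark of $w$ in this spine configuration; this is immediate because at each branching event the $B_w$ spines passing through $w$ each pick independently and uniformly among $a_w$ children.

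I then define $\Q^k$ by the Radon--Nikodym derivative
\[
\frac{d\Q^k}{d\tilde{\P}^k}\bigg|_{\Fg_t} \;=\; \prod_{v\in\skel(t),\,\tau_v\le t} a_v^{B_v}\,\cdot\prod_{v\in\skel(t)}\exp\!\left(-(m_{B_v}-1)\!\int_{\sigma_v(t)}^{\tau_v(t)}\ind_{0}(X_v(s))\,ds\right).
\]
A Girsanov-type calculation for jump processes shows that this density changes the branching rate of a mark-$j$ particle from $1$ to $m_j$ (contributing $m_j e^{-(m_j-1)\int \ind_0}$ at each jump and $e^{-(m_j-1)\int\ind_0}$ on the still-alive final segment) and size-biases the offspring distribution from $p_a$ to $a^j p_a/m_j$ (the factor $a^j/m_j$ per jump absorbs the $m_j$ and leaves the bare $a^{B_v}$), while leaving the uniform spine-choice rule unchanged. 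This reproduces the verbal dynamics of $\Q^k$ given in the paper, and the assumed finiteness of the relevant moments of $\mu$ ensures the density has mean one on every $\Fg_t$.

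The identity now follows by cancellation. Setting
\[
F \;=\; f(\chi^1_t)\cdots f(\chi^k_t)\prod_{v\in\skel(t)}\exp\!\left((m_{B_v}-1)\!\int_{\sigma_v(t)}^{\tau_v(t)}\ind_{0}(X_v(s))\,ds\right),
\]
multiplication by $d\Q^k/d\tilde{\P}^k$ cancels the two exponential products and leaves $f(\chi^1_t)\cdots f(\chi^k_t)\prod_{v\in\skel(t),\,\tau_v\le t} a_v^{B_v}$. Taking $\tilde{\P}^k$-expectation and expanding via the disintegration above into a sum over ordered $k$-tuples $(v_1,\ldots,v_k)\in N_t$, the product $\prod a_w^{B_w}$ cancels exactly against the spine-selection weight $\prod a_w^{-B_w}$, leaving $\E[\sum_{v_1,\ldots,v_k\in N_t} f(X_{v_1}(t))\cdots f(X_{v_k}(t))]$, which is the claim.

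The main obstacle, and the reason the authors simply cite \cite{HM11}, is checking rigorously that the proposed density truly is a martingale under $\tilde{\P}^k$ and that it induces the $\Q^k$-dynamics exactly as described: the rate-change and offspring-biasing factors must combine without leaving any spurious terms. This is standard but tedious jump-process bookkeeping carried out in the general setup of \cite{HM11}, so I would appeal to that reference; once accepted, the cancellation above finishes the proof.
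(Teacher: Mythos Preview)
The paper does not actually prove this lemma: it states the result and defers entirely to \cite{HM11}, so there is no ``paper's own proof'' to compare against. Your proposal is a correct high-level sketch of the spine change-of-measure argument that \cite{HM11} carries out in full generality, and you yourself note at the end that the rigorous bookkeeping (martingale property of the density, verification of the $\Q^k$-dynamics) is precisely what is outsourced to that reference. In that sense your write-up is compatible with, indeed more informative than, what the paper does; there is nothing to correct or contrast.
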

    
	Clearly if we take $f\equiv 1$, then the left hand side is simply the $k$th moment of the number of particles alive at time $t$. The lemma is useful since the right-hand side depends on at most $k$ particles at a time, rather than the arbitrarily large random number of particles on the left-hand side.
	
	\smallskip
	Having introduced the spine technique, we can now proceed with the proof of Theorem 1.
	We first use Lemma \ref{many_to_few} for the case $k=1$, which is simply the many-to-one lemma, to deduce two convenient representations for the first moments: a Feynman-Kac expression 	and a variation of constants 	formula. Indeed, the exponential expression equally works  for other random potentials and, hence, is well known for instance in the parabolic Anderson model literature. More interestingly, the variation of constants representation is most useful in 	the case of  a one-point potential: it simplifies to a renewal type equation. Understanding when those are proper renewal equations replaces the spectral theoretic arguments of \cite{ABY98} and explains the different cases appearing in 			Theorem 1.
	\begin{lemma}\label{l}
		The first moments can be expressed as
		\begin{align}
			M^1(t,x)&=\E_x\big[e^{(m-1)\int_0^t \ind_{0}(\xi_r)\,dr}\big],\label{eqn:s2}\\
			M^1(t,x,y)&=\E_x\big[e^{(m-1) \int_0^t \ind_{0}(\xi_r)\,dr}\ind_{y}(\xi_t)\big], \label{eqn:s1}
		\end{align}
		where $\xi_t$ is a single particle moving with Q-matrix $\mathcal A$. Furthermore, these quantities fulfill
		\begin{align}
			M^1(t,x)&=1+(m-1) p_t(x,0) \ast M^1(t,0),\label{eqn:2}\\
			M^1(t,x,y)&=p_t(x,y)+(m-1) p_t(x,0) \ast M^1(t,0,y),\label{eqn:1}
		\end{align}
		where $\ast$ denotes ordinary convolution in $t$.
	\end{lemma}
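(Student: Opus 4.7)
The plan is in two steps. For the Feynman--Kac identities (\ref{eqn:s2}) and (\ref{eqn:s1}) I would specialize Lemma~\ref{many_to_few} to $k=1$. Three observations then suffice: with only one spine, every particle $v$ in the skeleton carries mark $B_v=1$, so each factor $m_{B_v}-1$ equals $m-1$; the lifetime intervals $\{[\sigma_v(t),\tau_v(t)]\}_{v\in\skel(t)}$ partition $[0,t]$, because under $\Q^1$ the spine is passed from parent to a uniformly chosen offspring at every branching along the skeleton; and the spatial motion of the spine is unchanged by the size-biasing of the branching, so $\chi^1$ is a continuous-time Markov chain with Q-matrix $\mathcal A$ started at $x$. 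These three facts collapse the product in Lemma~\ref{many_to_few} to a single exponential
\[
\exp\!\left((m-1)\int_0^t \ind_0(\chi^1_s)\,ds\right),
\]
after which the choices $f\equiv 1$ and $f=\ind_y$ give (\ref{eqn:s2}) and (\ref{eqn:s1}) directly.

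For the renewal-type equations (\ref{eqn:2}) and (\ref{eqn:1}) I would stay inside the Feynman--Kac picture rather than detour through PDEs. With $L_s(0)=\int_0^s\ind_0(\xi_r)\,dr$, differentiating $s\mapsto -e^{(m-1)(L_t(0)-L_s(0))}$ and applying the fundamental theorem of calculus yields the pathwise identity
\[
e^{(m-1)L_t(0)} = 1 + (m-1)\int_0^t \ind_0(\xi_s)\,e^{(m-1)(L_t(0)-L_s(0))}\,ds.
\]
Multiplying by $\ind_y(\xi_t)$, taking $\E_x$, and applying the Markov property of $\xi$ at time $s$ (the indicator $\ind_0(\xi_s)$ pins $\xi_s=0$, so the post-$s$ conditional expectation is exactly $M^1(t-s,0,y)$, while the pre-$s$ part contributes $p_s(x,0)$) turns the right-hand side into a convolution and produces (\ref{eqn:1}); the same calculation with $\ind_y(\xi_t)$ removed produces (\ref{eqn:2}).

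The main subtlety I anticipate lies in step one: one has to read off the two structural facts about $\Q^1$ --- that the skeleton intervals cover $[0,t]$ without gaps and that the spine's marginal spatial law really is that of $\xi$ under $\P_x$ --- from the construction in \cite{HM11} rather than merely quote Lemma~\ref{many_to_few} as a black box. Once these are in place, both halves reduce to bookkeeping: the integrand is bounded by $e^{|m-1|t}$, so Fubini freely justifies every exchange of integral and expectation in step two.
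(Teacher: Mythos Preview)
Your treatment of the Feynman--Kac identities (\ref{eqn:s2}) and (\ref{eqn:s1}) is essentially identical to the paper's: both specialize Lemma~\ref{many_to_few} to $k=1$, observe that the skeleton is a single line of descent so the exponents concatenate into one integral along the spine, and then note that the spine's spatial motion under $\Q^1$ coincides with that of $\xi$ under $\P_x$.

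For the variation of constants formulas (\ref{eqn:2}) and (\ref{eqn:1}) your argument is correct but genuinely different from the paper's. The paper expands $e^{(m-1)L_t(0)}$ as a power series, rewrites the $n$th term as an iterated integral over a simplex, peels off the first integration variable via the Markov property, and then recognizes the remaining series as $M^1(t-r_1,0,y)$. You instead use the pathwise identity
\[
e^{(m-1)L_t(0)} = 1 + (m-1)\int_0^t \ind_0(\xi_s)\,e^{(m-1)(L_t(0)-L_s(0))}\,ds
\]
obtained by differentiating in $s$, and then apply the Markov property once at the running time $s$. Your route is shorter and more transparent; the paper's series expansion, on the other hand, has the side benefit that its iterated structure reappears verbatim later as the representation (\ref{eqn:iter}) used in the infinite-mean renewal argument for case iii)\,b). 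Either approach is perfectly adequate for the lemma itself.
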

	For completeness we include a proof of these well-known relations. First let us briefly mention why the renewal type equations occur naturally. The Feynman-Kac representation can be proved in various ways; we derive it simply from the many-to-few lemma. The Feynman-Kac formula 	then leads naturally to solutions of discrete-space heat equations with one-point potential:
	\begin{align*}
		\begin{cases}
			\frac{\partial }{\partial  t}u(t,x)=\mathcal A u(t,x)+(m-1)\ind_{0}(x)u(t,x)\\
			u(0,x)=\ind_y(x)
		\end{cases}.
	\end{align*}
	Applying the variation of constants formula for solutions gives
	\begin{align*}
		u(t,x)&=P_t u(0,x)+\int_0^t P_{t-s} (m-1)\ind_0(x)u(s,x)\,ds\\
		&=p_t(x,y)+(m-1)\int_0^t p_{t-s}(x,0) u(s,x)\,ds,
	\end{align*}
	where $P_t$ is the semigroup corresponding to $\mathcal A$, i.e. $P_tf(x) = \E_x[f(\xi_t)]$.

	\begin{proof}[Proof of Lemma \ref{l}]
		To prove (\ref{eqn:s2}) and (\ref{eqn:s1}) we apply the easiest case of Lemma \ref{many_to_few}: we choose $k=1$ and $f\equiv 1$  (resp.  $f(z) = \ind_y(z)$ for (\ref{eqn:s1})). Since there is exactly one spine at all times, the 		skeleton reduces to a single line of descent. Hence $m_{B_v}-1=m-1$ and the integrals in the product combine to become a single integral along the path of the spine up to time $t$. Thus
		\[M^1(t,x)=\Q_x\big[e^{(m-1)\int_0^t \ind_{0}(\xi_r)\,dr}\big] \hspace{5mm}\hbox{ and }\hspace{5mm} M^1(t,x,y)=\Q_x\big[e^{(m-1) \int_0^t \ind_{0}(\xi_r)\,dr}\ind_{y}(\xi_t)\big]\]
		which is what we claimed but with expectations taken under $\Q$ rather than the original measure $\P$. However we note that the motion of the single spine is the same (it has Q-matrix $\mathcal A$) under both $\P$ and 		$\Q$, so we may simply replace $\Q$ with $\P$, giving (\ref{eqn:s2}) and (\ref{eqn:s1}).
		
	The variation of constants formulas can now be derived from the Feynman-Kac formulas. We only prove the second identity, as the first can be proved similarly. We use the exponential series to get
	\begin{align*}
		&\quad\E_x\Big[e^{(m-1) \int_0^t \ind_{0}(\xi_r)\,dr}\ind_{y}(\xi_t)\Big]\\
		&=\E_x\left[\sum_{n=0}^{\infty}\frac{(m-1)^n}{n!}\left(\int_0^t \ind_{0}(\xi_{r})\,dr\right)^n\ind_{y}(\xi_t)\right]\\
		&=\P_x(\xi_t=y)+\E_x\left[\sum_{n=1}^{\infty}\frac{(m-1)^n}{n!}\int_0^t \dots \int_0^t \ind_{0}(\xi_{r_1})\cdots \ind_{0}(\xi_{r_n})\,dr_n\ldots dr_1\ind_{y}(\xi_t)\right]\\
		&=p_t(x,y)+\E_x\left[\sum_{n=1}^{\infty}(m-1)^n\int_0^t \int_{r_1}^{t} \dots \int_{r_{n-1}}^{t} \ind_{0}(\xi_{r_1})\cdots \ind_{0}(\xi_{r_n})\,dr_n\ldots d r_2 dr_1\ind_{y}(\xi_t)\right].
	\end{align*}
	The last step is justified by the fact that the function that is integrated is symmetric in all arguments and, thus, it suffices to integrate over a simplex. We can exchange sum and expectation and obtain that the last expression equals
	\begin{eqnarray*}
		p_t(x,y)+ (m-1) \int_0^t \sum_{n=1}^{\infty}(m-1)^{n-1} \int_{r_1}^{t} \dots \int_{r_{n-1}}^{t} \P_x[ \xi_{r_1}=0,\ldots, \xi_{r_n}=0]\,dr_n\ldots d r_2 dr_1.
	\end{eqnarray*}
	Due to the Markov property, the last expression equals
	\begin{align*}
		p_t(x,y)+ (m-1) \int_0^t p_{r_1}(x,0) \sum_{n=1}^{\infty}(m-1)^{n-1} \int_{r_1}^{t} \dots \int_{r_{n-1}}^{t} \P_0 [ \xi_{r_2-r_1}=0, \ldots, \xi_{r_n-r_1}=0] \,dr_n\ldots d r_2 dr_1
	\end{align*}
	and can be rewritten as
	\begin{align*}
		p_t(x,y)+ (m-1) \int_0^t p_{r_1}(x,0) \left(\sum_{n=1}^{\infty}(m-1)^{n-1} \int_{0}^{t-r_1} \dots \int_{r_{n-1}}^{t-r_1} \P_0[ \xi_{r_2}=0, \ldots, \xi_{r_n}=0] \,dr_n\ldots d r_2 \right) dr_1.
	\end{align*}
	Using the same line of arguments backwards for the term in parentheses, the assertion follows.
\end{proof}
	
	Having derived variation of constants formulas, there are different ways to analyze the asymptotics of the first moments. Assuming more regularity for the transition probablities, this can be done as sketched in the next remark.
	
	\begin{rem}\label{rem}
		Taking Laplace transforms $\mathcal L$ in $t$, one can transform (\ref{eqn:2}), and similarly (\ref{eqn:1}), into the algebraic equation
		\begin{align*}
				\mathcal L M^1(\lambda,x)&=\frac{1}{\lambda}+(m-1) \mathcal L M^1(\lambda,0)\mathcal L p_\lambda (x,0)\qquad ,\lambda>0,
		\end{align*}
		which can be solved explicitly to obtain
		\begin{align}
			\mathcal L M^1(\lambda,x)=\frac{1}{\lambda(1-(m-1)\mathcal L p_{\lambda}(x,0))}\qquad ,\lambda>0.\label{eqn:3}
		\end{align}
		Assuming the asymptotics of $p_t(x,0)$ are known for $t$ tending to infinity (and are sufficiently regular), the asymptotics of $\mathcal L p_\lambda(x,0)$ for $\lambda$ tending to zero can be deduced from Tauberian 			theorems. Hence, from Equation (\ref{eqn:3}) one can then deduce the asymptotics of $\mathcal L M^1(\lambda,x)$ as $\lambda$ tends to zero. This, using Tauberian theorems in the reverse direction, allows one to deduce 			the asymptotics of $M^1(t,x)$ for $t$ tending to infinity.
		
		Unfortunately, to make this approach work, ultimate monotonicity and asymptotics of the type $p_t(x,0)\sim Ct^{-\alpha}$ are needed. This motivated the authors of \cite{ABY98} to assume $\textbf{(A4)}$ so that by the local 	central limit theorem
	\begin{align*}
		p_t(x,0)\sim \left(\frac{d}{2\pi}\right)^{d/2} t^{-d/2}.
	\end{align*}
	\end{rem}
	As we did not assume any regularity for $p_t$, the aforementioned approach fails in general. We instead use an approach based on renewal theorems recently seen in \cite{DS10}.
	
\begin{proof}[Proof of Theorem \ref{thm:1} for $M^1$]
		Taking into account irreducibility and the Markov property of $\mathcal A$, we see that the property ``$\int_0^\infty \ind_{0}(\xi_r)\,dr=\infty$ almost surely'' does not depend on the starting value $\xi_0$. 
		To prove case i), we simply apply dominated convergence to (\ref{eqn:s2}) and (\ref{eqn:s1}). If $\mathcal A$ is transient, then $\int_0^\infty \ind_{0}(\xi_r)\,dr<\infty$ almost surely and $M^1(t,x)$ converges to a constant. On the other hand if $\mathcal A$ is recurrent, then $\int_0^\infty \ind_{0}(\xi_r)\,dr=\infty$ almost surely and $M^1(t,x)\to 0$. In both cases $M^1(t,x,y)\to0$, because if $\mathcal A$ is transient then $\ind_{\{\xi_t=y\}}\to 0$ almost surely, and if $\mathcal A$ is recurrent then $M^1(t,x,y)\leq M^1(t,x)\to0$.\\
		Regime ii) is trivial as here $M^1(t,x)=1$ and $M^1(t,x,y)=p_t(x,y)$. Next, for regime iii) a) we exploit both the standard and the reverse H\"older inequality for $p>1$:
		\begin{align}
			 M^1(t,x,y)&\geq \E_x\big[e^{-(1/(p-1))(m-1) \int_0^t \ind_{0}(\xi_r)\,dr}\big]^{-(p-1)}p_t(x,y)^p,\label{eqn:11}\\
			M^1(t,x,y)&\leq \E_x\big[e^{p(m-1) \int_0^t \ind_{0}(\xi_r)\,dr}\big]^{1/p}p_t(x,y)^{(p-1)/p}.\label{eqn:12}
		\end{align}
		In the recurrent case $G_\infty(0,0)=\infty$ and thus $\beta=1$, so this case has already been dealt with in regime ii). Hence we may assume that $\mathcal A$ is transient so that $\int_0^\infty\ind_{0}(\xi_r)\,dr<\infty$ with 		positive probability. This shows that the expectation in the lower bound (\ref{eqn:11}) converges to a finite constant. By assumption $m-1<\beta$ so that there is $p>1$ satisfying 			$p(m-1)<\beta$. With this choice 		of $p$,  part 3) of Theorem 1 of \cite{DS10} implies that also the expectation in the upper bound (\ref{eqn:12}) converges to a finite constant. In total this shows that
		\begin{align*}
			C p_t(x,y)^p\leq M^1(t,x,y) \leq C' p_t(x,y)^{(p-1)/p}
		\end{align*}
		and the claim for $M^1(t,x,y)$ follows. For $M^1(t,x)$ we can directly refer to Theorem 1 of \cite{DS10}.\\
		
		For regimes iii) b) and c) we give arguments based on renewal theorems. A closer look at the variation of constants formula (\ref{eqn:1}) shows that only for $x=0$, $M^1(t,x,y)$ occurs on both sides of the equation. Hence, 		we start with the case $x=0$ and afterwards deduce the asymptotics for $x\neq 0$.

		Let us begin with the simpler case iii) c).  As mentioned above, in this case we may assume that $\mathcal A$ is transient so that 
		$\int_0^\infty p_r(0,0)\,dr<\infty$. Hence, dominated convergence ensures that the equation $\int_0^\infty e^{-\lambda t}p_t(0,0)\,dt=1/(m-1)$ has a unique positive root $\lambda$, which we call $r(m)$. The definition of 			$r(m)$ shows that
		$U(dt):=(m-1) e^{-r(m) t}p_t(0,0)\,dt$ is a probability measure on $[0,\infty)$ and furthermore $e^{-r(m) t}p_t(0,y)$ is directly Riemann integrable. Hence the classical renewal theorem (see page 349 of \cite{F71}) can be 			applied to the (complete) renewal equation	
		\begin{align*}
			f(t)=g(t)+f\ast U (t),
		\end{align*}
		with $f(t)=e^{-r(m) t}M^1(t,0,y)$ and $g(t)=e^{-r(m) t}p_t(0,y)$. The renewal theorem implies that
		\begin{align}
			\lim_{t\to\infty}f(t)=\frac{\int_0^\infty g(s)\,ds}{\int_0^\infty U((s,\infty))\,ds}\in (0,\infty)\label{eqn:h}
		\end{align}
		so that the claim for $M^1(t,0,y)$ follows including the limiting constants.
		
		\medskip
		For iii) b), we need to be more careful as the criticality implies that $(m-1)\int_0^{\infty}p_r(0,0)\,dr=1$. Hence, the measure $U$ as defined above is already a probability measure so that the variation of constants formula is indeed a proper renewal equation. The renewal measure $U$ only has finite mean if additionally
		\begin{align}
			\int_0^\infty rp_r(0,0)\,dr<\infty.\label{eqn:hh}
		\end{align}	
		In the case of finite mean the claim follows as above from (\ref{eqn:h}) without the exponential correction (i.e. $r(m)=0$). Note that $p_t(0,y)$ is directly Riemann integrable as the case $\beta>0$ implies that 		
		$\mathcal A$ is transient and $p_t(0,y)$ is decreasing.\\
		If (\ref{eqn:hh}) fails, we need a renewal theorem for infinite mean variables. Iterating Equation (\ref{eqn:1}) reveals the representation
		\begin{align}
			M^1(t,0,y)=p_{t}(0,y)\ast \sum_{n\geq 0}(m-1)^np_t(0,0)^{\ast n},\label{eqn:iter}
		\end{align}
		where $\ast n$ denotes $n$-fold convolution in $t$ and $p_t(0,y)\ast p_t(0,0)^{\ast 0}=p_t(0,y)$. Note that convergence of the series is justified by
		\begin{align*}
			(m-1)^np_t(0,0)^{\ast n}\leq \left((m-1)\int_0^t p_r(0,0)\,dr\right)^n
		\end{align*}
		and the assumption on $m$. Lemma 1 of \cite{E73} now implies that
		\begin{align}\label{1}
			\sum_{n\geq 0}(m-1)^np_t(0,0)^{\ast n}\approx  \frac{t}{(m-1) \int_0^{t}\int_s^\infty p_r(0,0)\,drds}
		\end{align}
		which tends to infinity as $(m-1)\int_s^\infty p_r(0,0)\,dr\to 0$ for $s\to\infty$ since we assumed that $(m-1)p_r(0,0)$ is a probability density in $r$. To derive from this observation the result for $M^1(t,0,y)$, note that the 			simple bound $p_t(0,y)\leq 1$ gives the upper bound
		\begin{align}\label{eqn:a1}
			M^1(t,0,y)\leq \int_0^t \sum_{n\geq 0}(m-1)^np_r(0,0)^{\ast n}\,dr.
		\end{align}
		For a lower bound, we use that due to irreducibility and continuity of $p_t(0,y)$ in $t$, there are $0<t_0<t_1$ and $\epsilon>0$ such that $p_t(0,y)>\epsilon$ for $t_0\leq t\leq t_1$. This shows that							\begin{align}\label{eqn:a2}
			M^1(t,0,y)\geq \epsilon \int_{t-t_0}^{t-t_1}\sum_{n\geq 0}(m-1)^np_r(0,0)^{\ast n}\,dr.
		\end{align}
		Combined with (\ref{1}) the lower and upper bounds directly prove the claim for $M^1(t,0,y)$.
		
		\smallskip
		It remains to deal with regime iii) b) and c) for $x\neq 0$. The results follow from the asymptotics of the convolutions as those do not vanish at infinity. But this can be deduced from simple upper and lower bounds  similar to 		(\ref{eqn:a1}) and (\ref{eqn:a2}). 
		
		\smallskip
		The asymptotic results for the expected total number of particles $M^1(t,x)$ follow from similar ideas: estimating as before
		\begin{align*}
			1+\epsilon \int_{t-t_0}^{t-t_1}M^1(r,0)\,dr\leq M^1(t,x)\leq 1+\int_0^t M^1(r,0)\,dr,
		\end{align*}
		and applying case 2) of Theorem 1 of \cite{DS10} to (\ref{eqn:s2}) with $x=0$, the result follows.
		\end{proof}
		
		
		
	
	
	We now come to the crucial lemma of our paper. We use the the many-to-few lemma to reduce higher moments of $N_t$ and $N_t(y)$ to the first moment. More precisely, a system of equations is derived that can be	solved inductively once the first moment is known. This particular useful form is caused by the one-point catalyst. A similar system can be derived in the same manner in the deterministic case if the one-point potential is replaced by a $n$-point potential. However the case of a random $n$-point potential is much more delicate as the sources are ``attracted'' to the particles, destroying any chance of a renewal theory approach.

	\begin{lemma}\label{highermoments}
		For $k\geq 2$ the $k$th moments fulfill
	\begin{align}
			M^k(t,x)&=M^1(t,x)+M^1(t,x,0)\ast g_k\big((M^1(t,0),\cdots, M^{k-1}(t,0)\big),\label{eqn:111}\\
			M^k(t,x,y)&=M^1(t,x,y)+M^1(t,x,0)\ast g_k\big(M^1(t,0,y),\cdots, M^{k-1}(t,0,y)\big),\label{eqn:222}
		\end{align}
		where
		\[g_k\big(M^1,...,M^{k-1}\big)=\sum_{j=2}^k\E\left[ X \choose j \right]\sum_{\substack{i_1,\ldots,i_j > 0 \\ i_1 + \ldots + i_j = k}}\frac{k!}{i_1!\cdots i_j!}M^{i_1}\cdots M^{i_j}.\]
	\end{lemma}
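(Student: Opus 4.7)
The plan is to apply Lemma~\ref{many_to_few} with $f \equiv 1$ (respectively $f = \ind_y$) and to decompose the right-hand side according to the first time $\tau^*$ at which the $k$ spines split among more than one offspring. Under $\Q^k_x$ the root particle carries mark $k$ and branches at rate $m_k$ when at the catalyst, and the easy computation
\[
\sum_a \frac{a^k p_a}{m_k}\cdot a^{1-k}=\frac{m}{m_k}
\]
shows that at each such branching event the $k$ spines all land on the same offspring with probability $m/m_k$. Hence, conditional on the path of the spine line (which still moves according to $\mathcal A$), the actual splitting rate is $(m_k-m)\ind_0(\xi_s)$, so that $\Q^k_x(\tau^*>t\mid\xi)=e^{-(m_k-m)L_t(0)}$. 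Because every particle on the spine line prior to $\tau^*$ carries mark $k$, the skeleton product in Lemma~\ref{many_to_few} telescopes to $e^{(m_k-1)L_{\tau^*\wedge t}(0)}$.

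On $\{\tau^*>t\}$ the two exponential factors combine to $e^{(m-1)L_t(0)}$ and the spines reduce to a single $\mathcal A$-particle $\xi$ at position $\xi_t$; the Feynman--Kac identities \eqref{eqn:s2} and \eqref{eqn:s1} then identify this contribution with $M^1(t,x)$, respectively $M^1(t,x,y)$, accounting for the first summand in \eqref{eqn:111}--\eqref{eqn:222}. On $\{\tau^*=s\le t\}$ the density of $\tau^*$ given $\xi$ equals $(m_k-m)\ind_0(\xi_s)e^{-(m_k-m)L_s(0)}$, and the $J\ge 2$ offspring carrying nonzero marks $d_1,\ldots,d_J$ at the split initiate independent subtrees which, by a second application of Lemma~\ref{many_to_few} under $\Q^{d_l}_0$, contribute expected factors $M^{d_l}(t-s,0)$ (respectively $M^{d_l}(t-s,0,y)$). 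Combining the size-$k$-biased weight $a^kp_a/m_k$ of the offspring count with the multinomial spine allocation and using the identity $\sum_a p_a\binom{a}{J}=\E[\binom{X}{J}]$ yields an expected subtree product equal to $g_k(M^1(t-s,0),\ldots,M^{k-1}(t-s,0))/(m_k-m)$ (and similarly with $y$-arguments). The factor $m_k-m$ cancels the one in the density of $\tau^*$, so that the remaining integrand is $\ind_0(\xi_s)e^{(m-1)L_s(0)}g_k(\ldots)$; integrating in $s$ and recognising $\E_x[\ind_0(\xi_s)e^{(m-1)L_s(0)}]=M^1(s,x,0)$ via \eqref{eqn:s1} produces the convolution $M^1(t,x,0)\ast g_k$ asserted in the lemma.

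The main technical hurdle is the combinatorial bookkeeping at the split event: one has to interleave the size-$k$-biased offspring law with the multinomial distribution of spine assignments, and observe that the ``no genuine split'' case $J=1$ has already been completely absorbed into the $M^1$ term via the survival factor $e^{-(m_k-m)L_t(0)}$, so that only the $J\ge 2$ contributions remain---and these, by the very definition of $g_k$, are precisely what appears on the right-hand side of \eqref{eqn:111} and \eqref{eqn:222}.
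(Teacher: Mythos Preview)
Your proposal is correct and follows essentially the same route as the paper: both decompose the many-to-few expectation at the first time the $k$ spines genuinely separate, identify the $\{\tau^*>t\}$ contribution with $M^1$ via the cancellation $e^{(m_k-1)L_t}e^{-(m_k-m)L_t}=e^{(m-1)L_t}$, and on $\{\tau^*=s\}$ combine the size-$k$-biased offspring law with the multinomial spine allocation to produce the factor $g_k/(m_k-m)$, after which independence of the post-split subtrees and a backward application of the many-to-few lemma yield the convolution with $M^1(\cdot,x,0)$. Your write-up is in fact slightly more explicit than the paper's about why the pre-$\tau^*$ skeleton product telescopes and why the $J=1$ term is already absorbed into the $M^1$ summand.
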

	\begin{proof}
		We shall only prove equation (\ref{eqn:111}); the proof of equation (\ref{eqn:222}) is almost identical. We recall the spine setup and introduce some more notation. To begin with, all $k$ spines are carried by the same 			particle $\xi$ which branches at rate $m_k = \E[X^k]$ when at 0. Thus the $k$ spines separate into two or more particles at rate $m_k - m$ when at 0 (since it is possible that at a birth event all $k$ spines continue to follow the same particle, which 		happens at rate $m$). We consider what happens at this first ``separation'' time, and call it $T$. 
		
		Let $i_1, \ldots, i_j >0$, $i_1 + \ldots + i_j = k$, and define $A_k(j;i_1,\ldots,i_j)$ to be the event that at a separation event, $i_1$ spines follow one particle, $i_2$ follow another, \ldots, and $i_j$ follow another. The first 			particle splits into $a$ new particles with probability $a^k p_a m_k^{-1}$ (see the definition of $\Q^k$). Then given that the first particle splits into $a$ new particles, the probability that $i_1$ spines follow one particle, $i_2$ 		follow another, \ldots, and $i_j$ follow another is
		\[\frac{1}{a^k} \cdot {a\choose j} \cdot \frac{k!}{i_1!\cdots i_j!}\]
		(the first factor is the probability of each spine making a particular choice from the $a$ available; the second is the number of ways of choosing the $j$ particles to assign the spines to; and the third is the number of ways of 		rearranging the spines amongst those $j$ particles). Thus the probability of the event $A_k(j;i_1,\ldots,i_j)$ under $\Q^k$ is
		\[\frac{1}{m_k} \E\left[ X\choose j\right] \frac{k!}{i_1!\cdots i_j!}.\]
		(Note that, as expected, this means that the total rate at which a separation event occurs is
		\[ m_k \cdot \frac{1}{m_k} \sum_{j=2}^k \E\left[ X\choose j\right] \sum_{\substack{i_1,\ldots,i_j > 0 \\ i_1 + \ldots + i_j = k}} \frac{k!}{i_1!\cdots i_j!} = m_k - m\]
		since the double sum is just the expected number of ways of assigning $k$ things to $X$ boxes without assigning them all to the same box.)
		
		However, for $j\geq2$, \emph{given} that we have a separation event, $A_k(j;i_1,\ldots,i_j)$ occurs with probability
		\[\frac{1}{m_k} \E\left[ X\choose j\right] \frac{k!}{i_1!\cdots i_j!}\left(\frac{m_k}{m_k - m}\right).\]
		
		Write $\chi_t$ for the position of the particle carrying the $k$ spines for $t\in[0,T)$, and define $\Fg_t$ to be the filtration containing all information (including about the spines) up to time $t$. Recall that the skeleton $\skel(t)$ 		is the tree generated by particles containing at least one spine up to time $t$; let $\skel(s;t)$ similarly be the part of the skeleton falling between times $s$ and $t$. Using the many-to-few lemma with $f=1$, the fact that 			by definition before $T$ all spines sit on the same particle and integrating out $T$, we obtain
		\begin{align*}
		\E\big[N_t^k\big] &= \Q^k\left[\prod_{v\in\skel(t)} e^{(m_{B_v}-1)\int_{\sigma_v(t)}^{\tau_v(t)}\ind_0(X_v(s))ds}\right]\\
		&= \Q^k\left[ e^{(m_k-1)\int_0^T \ind_0(\chi_s)ds} \ind_{\{T\leq t\}} \Q^k\left[\left.\prod_{v\in\skel(T;\hspace{0.5mm}t)} e^{(m_{B_v}-1)\int_{\sigma_v(t)}^{\tau_v(t)}\ind_0(X_v(s))ds}\right|\Fg_T\right]\right]\\
		& \hspace{50mm} + \Q^k\left[ e^{(m_k-1)\int_0^t \ind_0(\chi_s)ds} \ind_{\{T > t\}}\right]\\
		&= \int_0^t \Q^k\Bigg[ e^{(m_k-1)\int_0^u \ind_0(\chi_s)ds} (m_k-m)\ind_{0}(\chi_u) e^{-(m_k-m)\int_0^u \ind_0(\chi_s)ds}\\
		& \hspace{25mm}\cdot\Q^k\bigg[\prod_{v\in\skel(u;\hspace{0.5mm}t)} e^{(m_{B_v}-1)\int_{\sigma_v(t)}^{\tau_v(t)}\ind_0(X_v(s))ds}\bigg|\Fg_u; T=u\bigg]\Bigg] du\\
		& \hspace{50mm} + \Q^k\left[ e^{(m_k-1)\int_0^t \ind_0(\chi_s)ds} e^{-(m_k-m)\int_0^t \ind_0(\chi_s)ds}\right].
		\end{align*}
		To prove equation (\ref{eqn:222}), the same arguments are used with $f = \ind_y$ in place of $f = 1$.
		Now we split the sample space according to the distribution of the numbers of spines in the skeleton at time $T$. Since, given their positions and marks at 		time $T$, the particles in the skeleton behave independently, we may split the product up into $j$ independent factors. Thus
		\begin{align*}
		\E\big[N_t^k\big] &= \int_0^t \sum_{j=2}^k \sum_{\substack{i_1,\ldots,i_j > 0 \\ i_1 + \ldots + i_j = k}} \E\left[ X\choose j\right] \frac{k!}{i_1!\cdots i_j!} \Q^k\Bigg[ e^{(m-1)\int_0^u \ind_0(\chi_s)ds} \ind_{0}(\chi_u)\\
		& \hspace{25mm}\cdot\prod_{l=1}^j\Q^{i_l}\bigg[\prod_{v\in\skel(t-u)} e^{(m_{B_v}-1)\int_{\sigma_v(t-u)}^{\tau_v(t-u)}\ind_0(X_v(s))ds}\bigg]\Bigg] du\\
		& \hspace{50mm} + \Q^k\left[ e^{(m-1)\int_0^t \ind_0(\chi_s)ds} \right]\\
		&= \int_0^t \sum_{j=2}^k \sum_{\substack{i_1,\ldots,i_j > 0 \\ i_1 + \ldots + i_j = k}} \E\left[ X\choose j\right] \frac{k!}{i_1!\cdots i_j!} \E_x\left[ N_u(0) \right] \cdot\prod_{l=1}^j\E_0\left[ N_{t-u}^{i_l} \right] du + \E_x\left[ N_t \right],
		\end{align*}
		where we have used the many-to-few lemma backwards with $f = \ind_0$ (first expectation) and $f = 1$ (two last expectations) to obtain the last line. This is exactly the desired equation (\ref{eqn:111}). For Equation 			(\ref{eqn:222}) we again use $f = \ind_y$ in place of $f = 1$ and copy the same lines of arguments.
	\end{proof}
	
	\begin{rem}
		The factors appearing in $g_k$ are derived combinatorially from splitting the spines. In Lemma 3.1 of \cite{AB00} they appeared from Fa\`a di Bruno's differentiation formula.
	\end{rem}
	
	We need the following elementary lemma before we can complete our proof.
	
	\begin{lemma}\label{momentlem}
	For any non-negative integer-valued random variable $Y$, and any integers $a\geq b\geq1$,
	\[\E[Y^a]\E[Y] \geq \E[Y^b]\E[Y^{a-b+1}].\]
	\end{lemma}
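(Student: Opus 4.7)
The plan is to use the standard symmetrization trick with independent copies. Let $Y'$ be an independent copy of $Y$. By independence and identical distribution, one rewrites the difference one wants to bound below as
\[
\E[Y^a]\E[Y] - \E[Y^b]\E[Y^{a-b+1}] = \E\bigl[Y^a Y' - Y^b (Y')^{a-b+1}\bigr].
\]
Swapping the roles of $Y$ and $Y'$ (which leaves the left-hand side invariant) and averaging the two resulting expressions gives
\[
\E[Y^a]\E[Y] - \E[Y^b]\E[Y^{a-b+1}] = \tfrac{1}{2}\,\E\bigl[Y^a Y' + (Y')^a Y - Y^b (Y')^{a-b+1} - (Y')^b Y^{a-b+1}\bigr].
\]

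The key step is to recognize that the integrand factors as
\[
Y^a Y' + (Y')^a Y - Y^b (Y')^{a-b+1} - (Y')^b Y^{a-b+1} = Y Y' \bigl(Y^{a-b} - (Y')^{a-b}\bigr)\bigl(Y^{b-1} - (Y')^{b-1}\bigr),
\]
which one checks by pulling out $YY'$ from both pairs of terms and grouping. Since $a - b \geq 0$ and $b - 1 \geq 0$, both $z \mapsto z^{a-b}$ and $z \mapsto z^{b-1}$ are non-decreasing on $[0,\infty)$, so the two bracketed differences always have the same sign (with the convention $z^0 = 1$, one of the factors simply vanishes when $a=b$ or $b=1$, which matches the equality cases of the lemma). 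Thus the integrand is pointwise non-negative, and taking expectations gives the desired inequality.

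The only real obstacle is finding the correct factorization; once it is guessed, verification is a single line of algebra. I note in passing that this is essentially log-convexity of moments combined with majorization (the pair $(a,1)$ majorizes $(b, a-b+1)$ whenever $a \geq b \geq 1$), but the symmetrization argument is more self-contained and produces the factorization explicitly.
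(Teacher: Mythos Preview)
Your proof is correct and follows the same overall strategy as the paper: symmetrize over an independent copy (the paper writes this out as a double sum $\sum_{j,k}\P(Y=j)\P(Y=k)$) and then verify that the resulting symmetric integrand $j^a k + jk^a - j^b k^{a-b+1} - j^{a-b+1}k^b$ is pointwise non-negative.

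The difference lies in how that pointwise inequality is established. The paper extracts a factor $jk(j-k)^2$ and writes the remaining factor as an explicit polynomial with non-negative coefficients (after the reduction $b\geq a/2$). Your factorization
\[
jk\,(j^{a-b}-k^{a-b})(j^{b-1}-k^{b-1})
\]
is cleaner: non-negativity is immediate from the fact that the two bracketed differences share the sign of $j-k$, no WLOG reduction is needed, and the argument works verbatim for any non-negative (not necessarily integer-valued) random variable. Expanding each bracket via $j^n-k^n=(j-k)\sum_{i=0}^{n-1}j^{\,i}k^{\,n-1-i}$ recovers the paper's identity, so the two factorizations are equivalent; yours simply packages it more transparently.
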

	
	\begin{proof}
	Assume without loss of generality that $b\geq a/2$. Note that for any two positive integers $j$ and $k$,
	\begin{align*}
	j^a k + j k^a - j^b k^{a-b+1} - j^{a-b+1} k^b &= jk(j-k)^2\Big(j^{a-3} + 2j^{a-4}k + 3j^{a-5}k^2 + \ldots\\
	&\hspace{10mm} + (a-b)j^{b-2} k^{a-b-1} + (a-b-1)j^{b-3}k^{a-b} + \ldots\\
	&\hspace{20mm} + 2jk^{a-4} + k^{a-3}\Big)\\
	&\geq 0.
	\end{align*}
	Thus
	\begin{align*}
	&\E[Y^a]\E[Y] - \E[Y^b]\E[Y^{a-b+1}]\\
	&= \sum_{j\geq1}j^a \P(Y=j)\sum_{k\geq1}k \P(Y=k) - \sum_{j\geq1}j^b \P(Y=j)\sum_{k\geq1}k^{a-b+1} \P(Y=k)\\
	&=\sum_{j\geq1}\sum_{k>j} (j^a k + k^a j - j^b k^{a-b+1} - j^{a-b+1} k^b)\P(Y=j)\P(Y=k)\\
	&\geq0
	\end{align*}
	as required.
	\end{proof}
	
	We can now finish the proof of the main result.

	\begin{proof}[Proof of Theorem 1 for $M^k$]
	Case i) follows just as for $M^1$, applying dominated convergence to the $\Q^k$-expectation in Lemma \ref{many_to_few}. Note that if $T$ is the first split time of the $k$ spines (as in Lemma \ref{highermoments}) then $e^{(m_k-1)\int_0^T \ind_0(\xi_s)ds}$ is stochastically dominated by $e^{(m_k-1)\tau}$ where $\tau$ is an exponential random variable of parameter $m_k - m_1$; this allows us to construct the required dominating random variable.
	
	\smallskip
	For case ii), using Lemmas \ref{l} and \ref{highermoments}  we find the lower bound
	\begin{align}\label{hh}
		M^k(t,x) \geq  1+C\int_0^tp_s(x,0)M^{k-1}(t-s,0)ds\geq C\int_0^t p_s(x,0) M^{k-1}(t-s,0) ds.
	\end{align}
	An upper bound can be obtained by additionally using Lemma \ref{momentlem} (to reduce $g_k$ to the leading term $M^1 M^{k-1}$) to obtain
	\begin{align}\label{ll}
	M^k(t,x) \leq 1 + C\int_0^t p_s(x,0) M^{k-1}(t-s,0) ds.
	\end{align}
	Using inductively the lower bound (\ref{hh}) and furthermore the iteration
	\begin{align}\label{cal}\begin{split}
	&\int_0^t \P_x(X_{s_1}=0) \int_0^{t-s_1}\P_0(X_{s_2}=0)\ldots \int_0^{t-s_1-\ldots-s_{k-2}} \P_0(X_{s_{k-1}}=0) ds_{k-1} \ldots ds_2 ds_1\\
	&=\int_0^t \int_{s_1}^t \ldots \int_{s_{k-2}}^t \P_x(X_{s_1}=0, X_{s_2}=0,\ldots,X_{s_{k-1}}=0) ds_{k-1} \ldots ds_2 ds_1\\
	&= \frac{1}{(k-1)!}\int_0^t\int_0^t\ldots\int_0^t \P_x(X_{s_1}=0,X_{s_2}=0,\ldots,X_{s_{k-1}}=0)ds_{k-1}\ldots ds_2 ds_1\\
	&= \frac{1}{(k-1)!}\E_x\left[\left(\int_0^t\ind_{\{X_s=0\}}ds\right)^{k-1}\right]\\
  &=\frac{1}{(k-1)!}\E_x\left[L_t(0)^{k-1}\right]
\end{split}
	\end{align}
	we see that $M^k(t,x)$ goes to infinity if $\mathcal A$ is recurrent and to a constant if $\mathcal A$ is transient. This implies that the additional summand $1$ in (\ref{ll}) can be omitted asymptotically in both cases. The claim follows.

	\smallskip
  The lower bound of case iii)a) follows by the same argument as for case ii), and the upper bound is a straightforward induction using Lemmas \ref{highermoments} and \ref{momentlem}. The cases iii)b) and c) also follow from Lemma \ref{highermoments} and induction based on the asymptotics for $M^1$.
	\end{proof}

\section{Acknowledgements}
	LD would like to thank Martin Kolb for drawing his attention to \cite{ABY98} and Andreas Kyprianou for his invitation to the Bath-Paris workshop on branching processes, where he learnt of the many-to-few lemma from MR. The authors would also like to thank Piotr Milos for checking an earlier draft, and a referee for pointing out several relevant articles.

\end{document}